\numberwithin{equation}{section}
\newtheorem{theorem}{Theorem}[section]
\newtheorem{proposition}[theorem]{Proposition}
\newtheorem{lemma}[theorem]{Lemma}
\theoremstyle{definition}
\newtheorem{definition}[theorem]{Definition}
\newtheorem{remark}[theorem]{Remark}
\newcounter{FNC}[page]
\def\fauxfootnote#1{{\addtocounter{FNC}{2}\Magenta{$^\fnsymbol{FNC}$}%
     \let\thefootnote\relax\footnotetext{\Magenta{$^\fnsymbol{FNC}$#1}}}}
\newcommand{\dcol}[1]{\Blue{#1}}
\newcommand{\demph}[1]{\dcol{{\sl#1}}}
\newcommand{\rk}{\mbox{\rm rank}}
\newcommand{\srk}{\mbox{\scriptsize\rm rank}}
\newcommand{\C}{\mathbb{C}}
\newcommand{\R}{\mathbb{R}}
\newcommand{\Z}{\mathbb{Z}}
\renewcommand{\P}{\mathbb{P}}
\newcommand{\bO}{{\mathbb O}}
\newcommand{\bS}{{\mathbb S}}
\newcommand{\bT}{{\mathbb T}}
\newcommand{\calA}{{\mathcal A}}
\newcommand{\calO}{{\mathcal O}}
\newcommand{\calS}{{\mathcal S}}
\title[Lower bounds and orientability of real toric varieties]{Lower bounds in 
       real algebraic geometry and orientability of real toric varieties}
\author{Evgenia Soprunova}
\address{Department of Mathematics\\
        Kent State University\\
        Summit Street, Kent, OH 44242\\
        USA}
\email{soprunova@math.kent.edu}
\urladdr{http://www.math.kent.edu/~soprunova/}
\author{Frank Sottile}
\address{Frank Sottile\\
         Department of Mathematics\\
         Texas A\&M University\\
         College Station\\
         Texas \ 77843\\
         USA}
\email{sottile@math.tamu.edu}
\urladdr{www.math.tamu.edu/\~{}sottile}
\thanks{Research of Sottile supported in part by NSF grant DMS-1001615, and the
  Mittag-Leffler Institut}
\subjclass{14M25, 57B20, 57S10, 14P99}
\begin{document}

\begin{abstract}
 The real solutions to a system of sparse polynomial equations may be realized as a fiber
 of a projection map from a toric variety.
 When the toric variety is orientable, the degree of this map is a lower bound for the
 number of real solutions to the system of equations.
 We strengthen previous work by characterizing when the toric variety is
 orientable. 
 This is based on work of Nakayama and Nishimura, who characterized the
 orientability of smooth real toric varieties.
\end{abstract}

\maketitle

\section*{Introduction}

A ubiquitous phenomenon in enumerative real algebraic geometry 
is that many geometric problems possess a 
non-trivial lower bound on their number of real solutions.
For example, at least 3 of the 27 lines on a real cubic surface are real as are at least 8 of
the  12 rational cubics interpolating 8 real points in the 
plane~\cite[Prop.\ 4.7.3]{DeKh00}, but there are many, many other 
examples~\cite{AG11, EG02, FKh, secant, OT11, OT12, RSSS, SS06, W}.
This phenomenon has the potential for significant
impact on the applications of mathematics as a nontrivial lower bound is an existence proof
of real solutions.
For this potential to be realized, methods need to be developed to predict when a
system of polynomial equations or a geometric problem has a lower bound on its number of
real solutions and to compute this bound.

We developed a theory of lower bounds on the number of real solutions to systems of sparse
polynomials~\cite{SS06}.
There, a system of polynomial equations was
formulated as a 
fiber of a projection map from a toric subvariety of a sphere.
When the 
toric variety is
orientable, the absolute value of the
degree of this projection map is a lower bound on the number of
real solutions.
Besides giving a condition implying this orientability, a method (foldable triangulations of
the Newton polytope) was developed to compute the degree of certain maps, and a
class of examples of polynomial systems (Wronski polynomial systems from posets) was presented 
to which this theory applied.

Further work~\cite{JW07,JZ12} on foldable triangulations 
has advanced our understanding of the bound they give.
Others~\cite{AG11,FKh,modfour,OT11, OT12} have developed additional methods for proving lower bounds
in real algebraic geometry and experimentation~\cite{RSSS, SS06, HS} has revealed many more
likely examples of lower bounds. 

We characterize 
which sparse polynomial systems possess a lower bound in the context of~\cite{SS06},
by extending
work of Nakayama and Nishimura~\cite{NN05}, who characterized
the orientability of small covers, which are
topological versions of smooth real projective toric varieties.
We characterize the orientability of the smooth points of any 
real toric variety, as well as toric subvarieties of a sphere,
solving an important open problem from~\cite{SS06}.

We review the construction of real toric varieties and spherical toric varieties in
Section~1, where we 
formulate our results on orientability.
Section~2 contain the mildly technical proof of these results.
In Section~3 we use this characterization of orientability
to strengthen results from~\cite{SS06} on the theory of lower bounds for the number of
real solutions to systems of sparse polynomials.

%
\section{Constructions of real toric varieties}

Real toric varieties appear in many applications of mathematics~\cite{TDS,Kr02,ASCB}
and are interesting objects in their own right~\cite{De04}.
Davis and Januszkiewicz~\cite{DJ91} introduced the notion of a small cover of a simple
convex polytope as a generalization of smooth projective real toric varieties.
We 
describe 
real toric varieties and small covers in terms of the gluing of
explicit cell complexes and give a mild extension of  Davis and Januszkiewicz's notion of a
small cover (which are manifolds) to not necessarily smooth spaces. 
A projective toric variety 
may be lifted to the sphere over 
real projective space, and we also describe these spherical toric varieties
in terms of the gluing of explicit cell complexes.

Nakayama and Nishimura~\cite{NN05} used this presentation of small covers to characterize 
their orientability, and similar arguments characterize
the orientability of the smooth points of the above spaces.

Real toric varieties, singular small covers, and toric subvarieties of the sphere are
obtained by gluing the real torus $\dcol{\bT^n}:=(\R^\times)^n$ or  
$\{\pm1\}\times \bT^n=\bT^{n+1}/\R_{\geq}$ along copies of $\bT^{n-1}$, one copy for each
vector in a set of integer vectors. 
There are further gluings 
in higher codimension, which presents
these spaces as explicit cell complexes.
They are smooth at the points of their dense torus $\bT^n$ (or at $\{\pm1\}\times\bT^n$)
and the attached tori $\bT^{n-1}$, and so their orientability is determined by the gluing 
along the tori $\bT^{n-1}$.

Complex toric varieties are normal varieties over $\C$ equipped with an action of an algebraic
torus $(\C^\times)^n$ having a dense orbit.
They are classified by rational fans $\Sigma$ in $\R^n$, which encode their construction
as a union of affine toric varieties $U_\sigma$, one for each cone $\sigma\in\Sigma$.
A toric variety is
a  union of disjoint torus orbits $\calO_\sigma$, one
for each cone $\sigma\in\Sigma$, with 
$\dim \calO_\sigma=n{-}\dim(\sigma)$.
The dense orbit $\calO_0$ coincides with the smallest affine patch $U_0$, and both are
associated to 
origin 0 in the fan.
See~\cite{Fu93} for a complete description.

A toric variety has a canonical set $Y$ of real points 
obtained from the real points
of the orbits $\calO_\sigma$ of the construction~\cite[Ch.~4]{Fu93}.
The dense orbit $\calO_0(\R)\simeq\bT^n$ is isomorphic to 
$(\R^\times)^n=(\R_{>0})^n\times \{\pm 1\}^n$, which has $2^n$ components, each a 
topological $n$-ball.
The subgroup $\{\pm1\}^n\subset\bT^n$ acts on 
$Y$, permuting the components of $\calO_0(\R)$.
The orbit space of $Y$ under the group $\{\pm1\}^n$ is isomorphic to the closure $Y_{\geq}$
of any component of $\calO_0(\R)$ in the usual topology (not Zariski!) on $Y$.
Each orbit $\calO_\sigma(\R)$ has a unique component 
contained in $Y_{\geq}$. 
We call this component $\dcol{F_\sigma}$ a \demph{face} of $Y_{\geq}$, which is isomorphic
to $(\R_{>0})^{n-\dim(\sigma)}$.
This endows 
$Y_{\geq}$ with the structure of a cell complex that is dual to the fan
$\Sigma$.
That is, the intersection $\overline{F_\sigma}\cap\overline{F_\tau}$ of the closures of
two faces is nonempty only if $\sigma$ and $\tau$ lie in some cone of $\Sigma$, in which
case it is the closure $\overline{F_\rho}$ where $\rho$ is the minimal such cone.

The integer points in a cone $\sigma$ of $\Sigma$ form a subsemigroup of $\Z^n$ whose image
in $(\Z/2\Z)^n=\{\pm1\}^n$ is a subgroup $\dcol{\overline{\sigma}}$ of $\{\pm1\}^n$.
This subgroup $\overline{\sigma}$ is the isotropy subgroup of the face $F_\sigma$ of
$Y_{\geq}$. 
We will write $(-1)^v=((-1)^{v_1},\dotsc,(-1)^{v_n})$ for the image of $v\in\Z^n$ in
$(\Z/2\Z)^n=\{\pm1\}^n$. 
This gives the following description of $Y$ as a quotient space of
$Y_{\geq}\times\{\pm1\}^n$. 

\begin{proposition}
 The real toric variety $Y$ is obtained as the quotient of the cell complex
 $Y_{\geq}\times\{\pm1\}^n$ by the equivalence relation where
\[
   (p,\xi)\sim(q,\eta)\ \Longleftrightarrow\ 
    p=q\ \mbox{ and }\ \xi\overline{\sigma}=\eta\overline{\sigma}\,,\ 
    \mbox{where $p$ lies in the face $F_\sigma$.}
\]
\end{proposition}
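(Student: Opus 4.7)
My plan is to define the natural map $\Phi \colon Y_{\geq} \times \{\pm 1\}^n \to Y$ by $\Phi(p,\xi) := \xi\cdot p$, where $\{\pm 1\}^n \subset \bT^n$ acts on $Y$ by the extension of its action on the dense orbit $\calO_0(\R)$. Because this action preserves each orbit $\calO_\sigma(\R)$ and acts by homeomorphisms, and because $Y_{\geq}$ carries the subspace topology from $Y$, the map $\Phi$ is continuous. I would then show in turn that $\Phi$ is surjective, that its fibers are exactly the equivalence classes of $\sim$, and that $\Phi$ is a quotient map of cell complexes.

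For surjectivity, I take $y \in Y$ and let $\calO_\sigma(\R)$ be its orbit. The components of $\calO_\sigma(\R)$ are in simply transitive bijection with $\{\pm 1\}^n/\overline{\sigma}$ (because $\overline{\sigma}$ is the stated isotropy of the distinguished component $F_\sigma \subset Y_{\geq}$), so there exists $\xi \in \{\pm 1\}^n$ with $y \in \xi\cdot F_\sigma$; then $\xi^{-1}y \in F_\sigma \subseteq Y_{\geq}$ and $\Phi(\xi^{-1}y,\xi)=y$. For the fibers, suppose $\Phi(p,\xi)=\Phi(q,\eta)$ with $p \in F_\sigma$ and $q \in F_\tau$. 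Since $\{\pm 1\}^n$ preserves each orbit, the equality $\xi p = \eta q$ forces $\sigma=\tau$. Then $\eta^{-1}\xi\cdot p = q$ with $p,q \in F_\sigma$, and the simply transitive action on components forces $\eta^{-1}\xi \in \overline{\sigma}$; since $\overline{\sigma}$ fixes $F_\sigma$ pointwise, this yields $p=q$ and $\xi\overline{\sigma}=\eta\overline{\sigma}$. The converse direction follows immediately from the same isotropy property.

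For the quotient property, I would observe that $\Phi$ is $\{\pm 1\}^n$-equivariant for the action on the second factor of the source, and that the induced map on orbit spaces is just the identity $Y_{\geq} \to Y_{\geq}$. Because $\{\pm 1\}^n$ is finite and both spaces are Hausdorff, both quotient maps to $Y_{\geq}$ are closed, from which a standard argument shows $\Phi$ itself is closed and hence a quotient map; the induced bijection $(Y_{\geq}\times\{\pm 1\}^n)/{\sim} \to Y$ is then a homeomorphism of cell complexes. The main delicate point is the identification of $\overline{\sigma}$ as the isotropy of $F_\sigma$, which is asserted in the excerpt as a consequence of the standard description of real points of toric orbits; I would invoke that fact rather than re-derive it, so the bulk of the argument reduces to the elementary orbit-component bookkeeping described above.
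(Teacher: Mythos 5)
The paper states this proposition without proof, presenting it as a summary of the standard orbit description of real toric varieties cited from~\cite[Ch.~4]{Fu93}, and your argument supplies exactly the verification that discussion presupposes: it rests on the same two facts the paper asserts, namely that $\overline{\sigma}$ is the pointwise stabilizer of $F_\sigma$ and that $\{\pm1\}^n/\overline{\sigma}$ permutes the components of $\calO_\sigma(\R)$ simply transitively. Your proposal is correct (the closedness of $\Phi$ can be seen even more directly, since $\Phi(C)=\bigcup_\xi \xi\cdot C_\xi$ is a finite union of closed sets), and it takes essentially the approach the paper intends.
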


A \demph{facet} of $Y_{\geq}$ is a face $F_\sigma$ corresponding to a one-dimensional
cone $\sigma$.
The real toric variety $Y$ is smooth at points corresponding to facets, but may not be smooth
along lower-dimensional faces.
If \dcol{$Y^\circ_{\geq}$} is
the union of the dense face $F_0$ and its facets, then 
\[
   \dcol{Y^\circ}\ :=\ 
    (Y^\circ_{\geq}\times\{\pm1\}^n)/\sim
\]
consists of smooth points of $Y$.

We generalize this construction.
Let $P$ be a finite ranked poset with minimal element $0$ and rank at most $n$ where two
elements $\sigma,\tau\in P$ have at most one minimal 
upper bound in $P$. 
The cones $\sigma$ in a rational fan in $\R^n$ form such a poset.
Suppose further 
that we have a collection $\dcol{\calS}:=\{\overline{\sigma}\mid\sigma\in P\}$
of subgroups of $\{\pm1\}^n$ where $\overline{\sigma}\simeq\{\pm1\}^{\srk(\sigma)}$, and if
$\sigma\subset\tau$, then $\overline{\sigma}\subset\overline{\tau}$.
Finally,
suppose that we have a cell complex $\Delta$ with cells (called faces) indexed by
elements of $P$,
\[
   \dcol{\Delta}\ =\ \coprod_{\sigma\in P} F_\sigma,
\]
where each face $F_\sigma$ is a cell of dimension $n-\rk(\sigma)$, which 
we identify with the interior of the closed unit ball in $\R^{n-\srk(\sigma)}$.
We further suppose that:
\begin{itemize}
 \item  $\Delta$ is a subset of the closed ball $\overline{F_0}$ in $\R^n$, 
 \item  the closure of a face $F_\sigma$ in $\R^n$ is homeomorphic to the closed ball of
   dimension $n-\rk(\sigma)$, and 
 \item  given $\sigma, \tau\in P$, the closures of the faces $F_\sigma$ and $F_\tau$
   either do not meet (if $\sigma$ and $\tau$ have no upper bound in $P$), or their
   intersection is the closure of the face $F_\rho$, where $\rho$ is the least upper bound
   of $\sigma$ and $\tau$ in $P$.
\end{itemize}

\begin{definition}
  Given a ranked poset $P$, system $\calS$ of subgroups of
  $\{\pm1\}^n$, and a cell complex $\Delta$ as above, the \demph{small cover} 
  \dcol{$Y(\Delta,\calS)$} of $\Delta$
  is the quotient 
\[
   (\Delta\times\{\pm1\}^n)/\sim\,,
\]
  where $(p,\xi)\sim(q,\eta)$ if and only if $p=q$ and
  $\xi\overline{\sigma}=\eta\overline{\sigma}$,
  where $p$ lies in the face $F_\sigma$.

  Observe that $Y(\Delta,\calS)$ is equipped with a natural action of $\{\pm1\}^n$ whose
  orbit space is $\Delta$, where the orbit of a face $F_\sigma$ is identified
  with $F_\sigma\times \{\pm1\}^n/\overline{\sigma}\simeq \bT^{n-\srk(\sigma)}$.
  In particular, it is a $\{\pm1\}^n$-equivariant compactification of $\bT^n$.
\end{definition}

A real toric variety $Y$ associated to a fan $\Sigma$ is a small cover where $P$ is the set of 
cones in the fan, $\Delta=Y_\geq$, and $\calS=\{\overline{\sigma}\mid \sigma\in \Sigma\}$.

The points of $Y(\Delta,\calS)$ corresponding to the big cell $F_0$ and to 
facets $F_\sigma$ are points where $Y(\Delta,\calS)$ is a topological manifold.
Write $\Delta^\circ$ for the union of the big cell and the facets, and 
$Y^\circ(\Delta,\calS)=(\Delta^\circ\times\{\pm1\}^n)/\sim$ for this subset of the smooth
points of  $Y(\Delta,\calS)$.

Let $\Delta\subset\R^n$ be a 
$n$-dimensional polytope with integer vertices and 
normal fan $\Sigma$.
Then the real toric variety $Y_\Sigma$ associated to $\Sigma$ has a projective embedding given
by $\Delta$. 
We may assume that the integer points $\Delta\cap\Z^n$ generate $\Z^n$.
Let $\P^\Delta$ be the real projective space with coordinates
indexed by $\Delta\cap\Z^n$ and $\dcol{y^\alpha}:=y_1^{\alpha_1}\dotsb y_n^{\alpha_n}$
the monomial with exponent $\alpha$.
Then we have an injection
 \begin{equation}\label{Eq:varphi}
  \varphi_\Delta\ \colon\  \bT^n\ \ni\ y\ \longmapsto\ 
   [y^\alpha\mid \alpha\in\Delta\cap\Z^n]\,,
 \end{equation}
where $[\dotsb]$ denotes homogeneous coordinates for $\P^\Delta$.
The closure $Y_\Delta$ of the image of this map is isomorphic to the real toric
variety $Y_\Sigma$, and the cell complex $Y^\circ_\geq$ is identified with the polytope
$\Delta$. 

The unit sphere $\bS^\Delta\subset\R^\Delta$ has a two-to-one map to 
$\P^\Delta$, and we define $Y^+_\Delta$ to be the pullback of $Y_\Delta$ along this map.
The sphere $\bS^\Delta$ has homogeneous coordinates
$(x_\alpha\mid\alpha\in\Delta\cap\Z^n)$, where we identify points 
with a positive constant of proportionality.
The group $\{\pm1\}^{n+1}$ acts on $\bS^\Delta$ with the last coordinate acting through
global multiplication by $\pm1$ and the remaining
coordinates $\{\pm1\}^n$ 
through the map $\varphi_\Delta$~\eqref{Eq:varphi},
\[
   (g,g_{n+1}).(x_\alpha\mid\alpha\in\Delta\cap\Z^n)\ =\ 
   (g_{n+1} g^\alpha x_\alpha\mid\alpha\in\Delta\cap\Z^n)\,.
\]

The faces of $Y^+_\Delta$ are its intersections with coordinate subspaces $\bS^F$ of
$\bS^\Delta$ corresponding to faces $F$ of $\Delta$,
\[
  \bS^F\ :=\ \{(x_\alpha\mid\alpha\in\Delta\cap\Z^n)
      \mid x_\alpha=0\mbox{ if }\alpha\not\in F\cap\Z^n\}\,.
\]
The isotropy subgroup of $\bS^F$ is
\[
   \{(g,g_{n+1})\mid g_{n+1}g^\alpha=1\mbox{ for }\alpha\in F\cap\Z^n\}\,.
\]

Vectors $b$ in the normal cone $\sigma_F$ to a face $F$ of $\Delta$ have constant dot
product with elements of $F$---define $b\cdot F$ to be this constant.
Then the subgroup 
\[
   {\overline{\sigma}_F}^+\ :=\ 
   \{ (-1)^{(b,b\cdot F)}\mid b\in\sigma_F\}
   \ \subset\ \{\pm1\}^{n+1}
\]
is the isotropy group of $\bS^F$, and therefore of the corresponding face of
$Y^+_\Delta$.

\begin{proposition}
  The spherical toric variety $Y^+_\Delta$ is obtained as the quotient of the cell complex
  $\Delta\times\{\pm1\}^{n+1}$ by the equivalence relation
\[
   (p,\xi)\sim(q,\eta)\ \Longleftrightarrow\ 
    p=q\ \mbox{ and }\ \xi{\overline{\sigma}_F}^+=\eta{\overline{\sigma}_F}^+\,,\ 
    \mbox{where $p$ lies in the face $F$.}
\]
\end{proposition}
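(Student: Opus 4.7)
The plan is to mirror the proof of the first Proposition of this section, with the polytope $\Delta$ replacing $Y_{\geq}$, the group $\{\pm1\}^{n+1}$ replacing $\{\pm1\}^{n}$, and the subgroups $\overline{\sigma}_F^+$ replacing $\overline{\sigma}$. Three ingredients need to be assembled: (i) a realization of $\Delta$ as a fundamental domain for the $\{\pm1\}^{n+1}$-action on $Y^+_\Delta$; (ii) the identification of the isotropy subgroup of a point in the open face corresponding to $F$ with the group $\overline{\sigma}_F^+$; and (iii) the formal quotient argument, which proceeds exactly as in the earlier Proposition.

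For (i), I would restrict $\varphi_\Delta$ from~\eqref{Eq:varphi} to $(\R_{>0})^n$ and lift the result to $\bS^\Delta$ by normalizing to unit length. This gives an injection of $(\R_{>0})^n$ into the positive orthant of $\bS^\Delta$; the closure of its image in $Y^+_\Delta$ is the positive part $Y^+_\Delta\cap\{x_\alpha\geq 0\}$, which the algebraic moment map identifies homeomorphically with $\Delta$, sending the relative interior of a face $F$ to the locus where $x_\alpha>0$ exactly for $\alpha\in F\cap\Z^n$. Every $\{\pm1\}^{n+1}$-orbit on $Y^+_\Delta$ meets this positive part: the signs of the non-vanishing coordinates of a point in the face indexed by $F$ satisfy the binomial relations of $Y^+_\Delta$, so they determine a character of the lattice generated by $F\cap\Z^n$; extending this character to $\Z^n$ (using the hypothesis that $\Delta\cap\Z^n$ generates $\Z^n$) and absorbing the global sign into $g_{n+1}$ provides an element of $\{\pm1\}^{n+1}$ that sends the point into the positive orthant.

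For (ii), a point $p$ in the face corresponding to $F$ satisfies $x_\alpha>0$ precisely for $\alpha\in F\cap\Z^n$, so $(g,g_{n+1})\cdot p=p$ if and only if $g_{n+1}g^\alpha=1$ for every $\alpha\in F\cap\Z^n$; this is the stabilizer displayed just before the Proposition. The inclusion $\overline{\sigma}_F^+\subseteq\mathrm{Stab}(p)$ is immediate: for $b\in\sigma_F\cap\Z^n$ we have $b\cdot\alpha=b\cdot F$ on all of $F$, so $(-1)^{b\cdot F}(-1)^{b\cdot\alpha}=1$. The reverse inclusion is the technical heart of the argument: given $(a,a_{n+1})\in(\Z/2\Z)^{n+1}$ satisfying $a\cdot\alpha+a_{n+1}\equiv 0\pmod 2$ for all $\alpha\in F\cap\Z^n$, one must produce $b\in\sigma_F\cap\Z^n$ with $b\equiv a\pmod 2$ and $b\cdot F\equiv a_{n+1}\pmod 2$. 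The hypothesis that $\Delta\cap\Z^n$ generates $\Z^n$ ensures that $F\cap\Z^n$ generates the affine lattice $\mathrm{aff}(F)\cap\Z^n$, so the mod $2$ constraint on $(a,a_{n+1})$ is precisely the reduction of the integral linear conditions defining $\sigma_F$ together with the value of $b\cdot F$. An arbitrary integer lift of $a$ which achieves $b\cdot F\equiv a_{n+1}$ at one point of $F$ can then be corrected by an element of $2\Z^n$ to land in the cone $\sigma_F$ without disturbing either of its reductions mod $2$. Step (iii) then yields the claimed quotient description.
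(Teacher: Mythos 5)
The paper states this Proposition without proof, so there is nothing on that side to compare with; your overall strategy (moment-map section of the positive part, identification of stabilizers, then the formal quotient argument) is the natural one and is surely what the authors intend. The problem is that the step you correctly single out as ``the technical heart'' rests on a false claim: the hypothesis that $\Delta\cap\Z^n$ generates $\Z^n$ does \emph{not} imply that $F\cap\Z^n$ generates the affine lattice $\mathrm{aff}(F)\cap\Z^n$ for every face $F$. Faces of dimension at most two are harmless, but a three-dimensional facet can already fail: take the Reeve simplex $T$ with vertices $(0,0,0)$, $(1,0,0)$, $(0,1,0)$, $(1,1,2)$, whose only lattice points are its vertices and whose lattice points affinely generate the index-two sublattice $\Z\times\Z\times 2\Z$ of $\Z^3$, and set
\[
\Delta\ =\ \mathrm{conv}\bigl(T\times\{0\},\ (0,0,0,1),\ (0,0,1,1)\bigr)\ \subset\ \R^4 .
\]
Then $\Delta\cap\Z^4$ consists of exactly the six vertices and affinely generates $\Z^4$, while $F=T\times\{0\}$ is a facet whose lattice points generate an index-two sublattice of $\mathrm{aff}(F)\cap\Z^4=\Z^3\times\{0\}$.

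For this $\Delta$ the statement you are trying to prove actually fails, so the gap is not repairable without adding a hypothesis. Concretely, $(g,g_5)=((1,1,-1,1),1)$ satisfies $g_5g^\alpha=1$ for all four $\alpha\in F\cap\Z^4$, hence fixes every point of the open face of $Y^+_\Delta$ over $F$, yet it does not lie in ${\overline{\sigma}_F}^+=\{((1,1,1,\pm1),1)\}$, since $\sigma_F$ is the ray spanned by $e_4$. In general the stabilizer is the mod~$2$ annihilator of the lattice generated by the differences of points of $F\cap\Z^n$, while ${\overline{\sigma}_F}^+$ is the mod~$2$ reduction of the integral normal lattice of $F$; these coincide exactly when the former lattice has odd index in $\mathrm{aff}(F)\cap\Z^n$ translated to the origin. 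Your step (i) silently uses the same condition: a $\{\pm1\}$-valued character of the sublattice generated by $\{(\alpha,1)\mid\alpha\in F\cap\Z^n\}$ extends to $\Z^{n+1}$ only if it kills the intersection of that sublattice with $2\Z^{n+1}$, and indeed in the example above surjectivity of $\Delta\times\{\pm1\}^{5}\to Y^+_\Delta$ also fails (the face over $F$ is all of $\bS^3$, but the orbit of the positive part reaches only half of its sign orthants). Likewise, in step (ii) the correction ``by an element of $2\Z^n$ to land in the cone $\sigma_F$'' presupposes that the chosen lift is already congruent mod $2\Z^n$ to a vector in the linear span of $\sigma_F$, which is precisely the point at issue. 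The Proposition therefore needs the additional hypothesis that the lattice points of every face of $\Delta$ generate its affine lattice with odd index (automatic for $n\le 3$ and for smooth $\Delta$), and your proof must invoke such a hypothesis rather than derive it from the spanning assumption on $\Delta\cap\Z^n$.
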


%
\section{Characterization of orientability}\label{S:two}

We follow Nakayama and Nishimura~\cite{NN05} to characterize the orientability of a 
general small cover and of spherical toric varieties, and determine their numbers of components.

\begin{theorem}\label{Th:small_cover}
   Let $Y(\Delta, S)$ be a small cover of dimension $n$.
\begin{itemize}
  \item[(1)]  
     $Y^{\circ}(\Delta, S)$ is orientable  if and only if there exists a basis of 
     $\{\pm 1\}^n$ such that for every $\sigma\in P$ of rank $1$, 
     the generator of\/ $\overline{\sigma}\simeq\{\pm1\}$ is a product of an odd number of
     basis vectors. 
\item[(2)] 
     The components of\/ $Y^{\circ}(\Delta, S)$ are naturally indexed by 
     $\{\pm1\}^n/\langle \overline{\sigma}\mid \rk(\sigma) =1\rangle$. 
\end{itemize}
\end{theorem}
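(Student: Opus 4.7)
The plan is to read off the structure of $Y^\circ(\Delta,S)$ directly from its construction. Because only the rank-$0$ and rank-$1$ elements of $P$ contribute to $\Delta^\circ$, I denote by $v_\sigma$ the unique nontrivial element of $\overline{\sigma}$ whenever $\rk(\sigma)=1$. Then $Y^\circ$ consists of $2^n$ copies of $\Delta^\circ$ indexed by $\xi\in\{\pm1\}^n$, with copies $\xi$ and $\xi v_\sigma$ identified along $F_\sigma$ via the identity map. Part (2) is then immediate: each copy $\Delta^\circ\times\{\xi\}$ is connected (the big cell $F_0$ is a ball and the facets are attached to it), and two copies lie in the same component of $Y^\circ$ iff they differ by translation by an element of $H:=\langle v_\sigma\mid\rk(\sigma)=1\rangle$, so the components of $Y^\circ$ are naturally indexed by $\{\pm1\}^n/H$.

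For part (1), I first carry out a local orientation analysis at an interior point of a facet $F_\sigma$. A manifold chart of $Y^\circ$ around such a point is obtained by gluing two half-spaces along their common boundary: using local coordinates $(x_1,\ldots,x_n)$ with $\{x_n\ge 0\}$ mapping to copy $\xi$ by the identity and $\{x_n\le 0\}$ mapping to copy $\xi v_\sigma$ by $(x_1,\ldots,x_n)\mapsto(x_1,\ldots,x_{n-1},-x_n)$, the standard orientation of the chart pulls back to the standard orientation on the copy $\xi$ but to the reversed orientation on the copy $\xi v_\sigma$. Consequently, a global orientation of $Y^\circ$ is precisely a sign assignment $\epsilon\colon\{\pm1\}^n\to\{\pm1\}$ satisfying $\epsilon(\xi v_\sigma)=-\epsilon(\xi)$ for every $\xi$ and every rank-$1$ generator $v_\sigma$.

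Writing $\epsilon=(-1)^\psi$ with $\psi$ valued in $\Z/2\Z$, restricting to any component (a coset of $H$) and normalizing $\psi$ at a basepoint, such $\psi$ exists iff the assignment $\phi(v_{\sigma_1}\cdots v_{\sigma_k}):=k\bmod 2$ defines a group homomorphism $\phi\colon H\to\Z/2\Z$; equivalently, iff every relation $v_{\sigma_1}\cdots v_{\sigma_k}=e$ in $H$ has $k$ even. Since $H$ is a subspace of the $\Z/2\Z$-vector space $\{\pm1\}^n$, any such $\phi$ extends to a linear functional $\ell\colon\{\pm1\}^n\to\Z/2\Z$ with $\ell(v_\sigma)=1$ for all rank-$1$ $\sigma$, and any such $\ell$ conversely restricts to a valid $\phi$. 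Finally, any nonzero such $\ell$ has the form $\ell=e_1^*+\cdots+e_n^*$ for some basis $e_1,\ldots,e_n$ of $\{\pm1\}^n$ (pick $e_1\in\ell^{-1}(1)$, extend to a basis, and replace each remaining $e_i\in\ell^{-1}(0)$ by $e_i+e_1$); in that basis, $\ell(v_\sigma)=1$ is exactly the condition that $v_\sigma$ is a product of an odd number of basis vectors, giving the characterization in the theorem.

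The main obstacle is the local-orientation step, since the abstract identity identification of the two copies on $F_\sigma$ looks orientation-preserving at first glance. The crucial point is the correct local model---two half-spaces glued along their common boundary---and the observation that a single orientation of the resulting ambient $\R^n$ pulls back to opposite orientations on the two halves via the change-of-variables $x_n\mapsto-x_n$. This parallels Nakayama--Nishimura's analysis~\cite{NN05} of smooth small covers and transfers verbatim to our possibly non-smooth setting precisely because every point of $Y^\circ$ is smooth by construction, so the smooth local model continues to apply.
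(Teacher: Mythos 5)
Your proof is correct and follows essentially the same route as the paper: the identification of components via the action of $H=\langle v_\sigma\rangle$ is identical, and your sign-compatibility condition $\epsilon(\xi v_\sigma)=-\epsilon(\xi)$ is exactly the paper's cycle condition $n_\xi=-n_{\xi g_\sigma}$, which the paper derives from the top cellular chain complex ($H_n=\ker\partial$) rather than from your local half-space chart, while your reformulation via a linear functional $\ell$ with $\ell(v_\sigma)=1$ is the content of the paper's Lemma~\ref{L:cond}. The only (cosmetic) difference is that you verify the orientation-reversal at a facet by an explicit coordinate reflection instead of reading it off the cellular boundary map.
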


 Thus $Y^{\circ}(\Delta, S)$ has $2^{n-k}$ connected components, where 
$2^k=|\langle \overline{\sigma}\mid \rk(\sigma) =1\rangle|$.

\begin{proof}
 For each $\sigma\in P$ with rank 1, let $g_\sigma$ be the generator of
 $\overline{\sigma}\simeq\Z/2\Z$.  
 Then $Y^\circ:=Y^{\circ}(\Delta, S)$ is obtained by gluing $(\Delta,\xi)$ and
 $(\Delta,\eta)$ along $F_\sigma$ whenever $\xi=\eta g_\sigma$ for some $\sigma\in P$ of
 rank 1, so the  connected components of $Y^\circ$  correspond to the 
 orbits of $Y^\circ$ under the action of 
 $\langle \overline{\sigma}\mid \rk(\sigma) = 1\rangle$. 

 The space $Y^{\circ}$ is orientable if and only if 
 $H_n(Y^{\circ},\Z)\neq \{0\}$.
 This group is the kernel $\ker\partial$ of the differential in the 
 cellular chain complex of the cell complex $Y^{\circ}$,
\[
   C_n\ \xrightarrow{\ \partial\ }\ C_{n-1}\,.
\]
 Here $C_n$ is the free abelian group generated by
\[
  \{\Delta\}\times\{\pm 1\}^n\ =\ \left\{(\Delta,\xi)\mid \xi\in\{\pm 1\}^n\right\}
\]
  and $C_{n-1}$ is the free abelian group generated by 
\[
   \left\{[F_\sigma,\xi]\mid \sigma\in P, \rk(\sigma) =1 , \xi\in \{\pm 1\}^n\right\}/\sim\,,
\]
 where $[F_{\sigma},\xi]\sim [F_\sigma,\xi g_\sigma]$. 
 Orient each facet $F_{\sigma}$ so that 
\[
   \partial(\Delta)\ =\ \sum_{\srk(\sigma)=1} F_{\sigma}\ .
\]
Consider an $n$-cycle 
\[
  X\ =\ \sum_{\xi\in \{\pm 1\}^n} n_\xi \cdot (\Delta,\xi)\ \in\ C_n
\]
on $Y^{\circ}$, where $n_\xi\in\Z$.
Then 
\[
  \partial(X)\ =\ 
   \sum_{\xi\in \{\pm 1\}^n}  n_\xi \sum_{\srk(\sigma)=1}[F_{\sigma},\xi]
   \ =\ \sum_{\srk(\sigma)=1}\ \sum_{\xi\in\{\pm1\}^n/\langle g_\sigma\rangle}
    (n_\xi+n_{\xi g_\sigma})[F_\sigma,\xi]\,.
\]
 Hence an $n$-cycle $X$ lies in $\ker\partial$ if and only if 
 $n_\xi=-n_{\xi g_\sigma}$ for all $\xi$ in $\{\pm 1\}^n$ and $\sigma$ of rank~1.  
 Equivalently, $n_\xi =(-1)^k n_{\xi g_{\sigma_1}\cdots\ g_{\sigma_k}}$ for
 all $\xi\in\{\pm 1\}^n$ and $\sigma_i$ of rank 1. 

 We show that $\ker\partial$ is non-trivial if and and only if there exists a
 basis  $e_1,\dots, e_n$ of $\{\pm 1\}^n$ such 
 $g_\sigma$ is a product of an odd number of basis
 vectors, for each element $\sigma\in P$ of rank one. 
 Let $\bO$ be the set of generators $g_\sigma$ of $\overline{\sigma}$ for rank one elements
 $\sigma\in P$.

 Suppose that there exists a basis  $e_1,\dots, e_n$ of $\{\pm 1\}^n$ such that each 
 $g_\sigma\in\bO$ is a product of an odd number of basis vectors.  
 For $\xi\in\{\pm 1\}^n$ define $n_\xi$ to be 1 if $\xi$ is a product of an even number of
 the $e_i$ and $-1$ if it is a product of  an odd number of the $e_i$. 
 Then 
 $n_\xi=-n_{\xi g_{\sigma}}$  for all $\xi$ and $\sigma$, so 
 $\ker\partial$ is non-trivial and hence $Y^{\circ}$ is orientable. 
 Since the number of connected components is $2^{n-k},$ the kernel is isomorphic to
 $\Z^{2^{n-k}}$.  

 If there is no such basis of $\{\pm1\}^n$, then there is some $g_\sigma\in\bO$ 
 which is a product of an even number of other elements in $\bO$, for 
 otherwise we can reduce $\bO$ to a linearly independent set and then extend it
 to a basis of $\{\pm 1\}^n$.  
 We get $g_\sigma=g_{\sigma_1}\cdots g_{\sigma_{2k}}$ and hence $1=g_\sigma g_{\sigma_1}\cdots
 g_{\sigma_{2k}}$, so  for every $\xi$ we get
\[
   n_{\xi}\ =\ (-1)^{2k+1}n_{\xi g_\sigma g_{\sigma_1}\cdots g_{\sigma_{2k}}}
          \ =\ -n_\xi\,,
\]
 which implies that $n_\xi=0$ and hence $\ker\partial =0$ and so $Y^{\circ}$ is
 non-orientable.  
\end{proof}

We restate the orientability criteria of Theorem~\ref{Th:small_cover} for real toric
varieties.

\begin{theorem}\label{T:orient_toric}
 Let $Y$ be a real toric variety defined by a fan $\Sigma$.
 Then $Y^{\circ}$ is orientable if and only if there exists a basis of 
 $\{\pm 1\}^n$ such that $(-1)^b$ is  a product of an odd
 number of basis vectors, for each primitive vector $b$ lying on a ray of $\Sigma$. 
\end{theorem}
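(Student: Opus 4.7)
The plan is to derive Theorem~\ref{T:orient_toric} as a direct specialization of Theorem~\ref{Th:small_cover}(1). The key observation is that a real toric variety $Y$ associated to a fan $\Sigma$ is already exhibited, in Section~1, as a small cover $Y(\Delta,\calS)$ where $\Delta = Y_{\geq}$, the poset $P$ consists of the cones of $\Sigma$ ordered by inclusion, and $\calS = \{\overline{\sigma}\mid \sigma\in\Sigma\}$. Under this correspondence, the rank of a cone $\sigma$ equals its dimension, so the rank-one elements of $P$ are precisely the rays of $\Sigma$.

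The main content of the proof is therefore a translation of the hypothesis ``$\overline{\sigma}$ is generated by a product of an odd number of basis vectors, for each rank-one $\sigma$'' into the language of primitive ray generators. For this I would use the description already recalled in Section~1: the integer points of a cone $\sigma$ form a subsemigroup of $\Z^n$ whose image in $\{\pm1\}^n = (\Z/2\Z)^n$ is the subgroup $\overline{\sigma}$. When $\sigma$ is the ray generated by the primitive vector $v\in\Z^n$, its integer points are exactly the non-negative multiples $\{kv\mid k\in\Z_{\geq 0}\}$, so their image in $\{\pm1\}^n$ is the cyclic subgroup generated by the single element $(-1)^v := ((-1)^{v_1},\dots,(-1)^{v_n})$.

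Once this identification is in place, the equivalence is immediate: a basis $e_1,\dots,e_n$ of $\{\pm1\}^n$ witnesses orientability in the sense of Theorem~\ref{Th:small_cover}(1) exactly when each generator $(-1)^v$, as $v$ ranges over primitive vectors on rays of $\Sigma$, is expressible as a product of an odd number of the $e_i$. This is precisely the condition stated in Theorem~\ref{T:orient_toric}, so applying Theorem~\ref{Th:small_cover}(1) finishes the argument.

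I expect no real obstacle here; the theorem is essentially a dictionary entry. The only thing worth being careful about is verifying that the cell-complex and subgroup data attached to $Y$ as a small cover genuinely match those coming from the fan-theoretic construction, but this was already established in the paragraphs preceding Definition~1, so the proof reduces to invoking Theorem~\ref{Th:small_cover}(1) and substituting $(-1)^v$ for the generator $g_\sigma$ of $\overline{\sigma}$.
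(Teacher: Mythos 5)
Your proposal is correct and coincides with the paper's own treatment: the paper offers no separate argument for Theorem~\ref{T:orient_toric}, presenting it explicitly as a restatement of Theorem~\ref{Th:small_cover} under the identification of $Y$ with the small cover $Y(Y_{\geq},\{\overline{\sigma}\})$, where the rank-one cones are the rays and $\overline{\sigma}=\langle(-1)^v\rangle$ for $v$ the primitive generator. Your translation of the subgroup data is exactly the intended dictionary.
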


The condition of Theorem~\ref{T:orient_toric} is easily checked.

\begin{lemma}\label{L:cond} 
Given $A\subset\{\pm 1\}^n,$ the condition that there exists a basis
of  $\{\pm 1\}^n$ such that each vector in $A$ is a product of an odd number
of basis vectors, is equivalent to the condition that no product of an odd number of
vectors in $A$ is equal to $1$ in $\{\pm 1\}^n$. 
\end{lemma}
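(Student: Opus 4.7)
\medskip

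\noindent\textbf{Proof plan.} I would work additively, identifying $\{\pm 1\}^n$ with the $\mathbb{F}_2$-vector space $\mathbb{F}_2^n$; under this identification multiplication becomes addition, the identity $1$ becomes $0$, and ``product of an odd number of basis vectors'' becomes ``sum of an odd number of basis vectors.'' The key observation is that a basis $e_1,\dots,e_n$ of $\mathbb{F}_2^n$ determines a parity linear functional $\phi\colon\mathbb{F}_2^n\to\mathbb{F}_2$ by $\phi(\sum c_ie_i)=\sum c_i$, and a vector is a sum of an odd number of the $e_i$ precisely when $\phi$ takes the value $1$ on it. Conversely, every nonzero linear functional $\phi$ arises this way: pick any $v$ with $\phi(v)=1$ and any basis $w_1,\dots,w_{n-1}$ of $\ker\phi$, and set $e_1=v$ and $e_i=v+w_{i-1}$ for $i\geq 2$. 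So the lemma reduces to showing that a linear functional $\phi$ with $\phi(a)=1$ for every $a\in A$ exists if and only if no odd sum of elements of $A$ is $0$.

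One direction is immediate: given such $\phi$, the sum of $k$ elements of $A$ has $\phi$-value $k\bmod 2$, which is $1$ when $k$ is odd and so is nonzero.

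For the converse, assume no odd sum of elements of $A$ equals $0$. The case $A=\emptyset$ is trivial, so fix $a_0\in A$ and set $W:=\operatorname{span}\{a+a_0\mid a\in A\}\subseteq\mathbb{F}_2^n$. The desired $\phi$ is the same thing as a linear functional with $\phi|_W=0$ and $\phi(a_0)=1$, which exists if and only if $a_0\notin W$. If instead $a_0\in W$, write $a_0=\sum_{i=1}^{k}(a_i+a_0)=a_1+\cdots+a_k+k\cdot a_0$ in $\mathbb{F}_2^n$. When $k$ is even this gives $a_0+a_1+\cdots+a_k=0$, an odd sum of $k+1$ elements of $A$; when $k$ is odd it gives $a_1+\cdots+a_k=0$, an odd sum of $k$ elements of $A$. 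Either way we contradict the hypothesis, so $a_0\notin W$ and $\phi$ exists.

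The only real subtlety is the parity bookkeeping in the last step: the term $k\cdot a_0$ contributes an extra copy of $a_0$ exactly when $k$ is odd, and that is what makes the linear-algebraic condition ``$a_0\in W$'' translate precisely into the combinatorial condition ``some odd sum of elements of $A$ vanishes.'' Once that translation is set up, the whole lemma is a short exercise in $\mathbb{F}_2$-linear algebra.
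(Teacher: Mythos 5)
Your proof is correct, but the converse is argued by a different (dual) route than the paper's. The paper works directly with the set $A$: it reduces $A$ to a maximal linearly independent subset $A'$, extends $A'$ to a basis of $\{\pm 1\}^n$, and observes that if some $v\in A\setminus A'$ were a product of an even number $v_1\cdots v_{2k}$ of vectors of $A'$, then $v\cdot v_1\cdots v_{2k}=1$ would be a forbidden odd product; so every element of $A$ is automatically an odd product in that basis. You instead dualize: you encode ``odd product of basis vectors'' as the level set $\phi=1$ of the parity functional of the basis, check that every nonzero functional arises this way, and then prove the existence of a functional identically $1$ on $A$ by showing $a_0\notin\operatorname{span}\{a+a_0\mid a\in A\}$ --- i.e., that $0$ is not in the affine span of $A$ over $\mathbb{F}_2$. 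Your parity bookkeeping in that last step is right (the term $k\cdot a_0$ flips the count between $k$ and $k+1$ exactly so that both cases yield an odd vanishing sum), and the forward direction is the same parity count as the paper's. The paper's argument is slightly more economical and produces the basis explicitly from $A$ itself; yours isolates the cleaner invariant statement (``$A$ lies in an affine hyperplane off the origin'') at the cost of the extra step showing every nonzero functional is a parity functional. Both are complete.
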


\begin{proof}
If we had $v_1\cdots v_{2k+1}=1,$ then $v_{2k+1}=v_1\cdots v_{2k},$ and expressing each
$v_i$ as the product of an odd number of basis elements of  $\{\pm 1\}^n$ yields a
contradiction. 
For the other implication,  reduce $A$ to a linearly independent set $A'$ and then extend
$A'$ to a basis of $\{\pm 1\}^n$.  
If there were a vector in $A\setminus A'$ which is a product of an even number of vectors
$v=v_1\cdots v_{2k},$ we would have then had $v\cdot v_1\cdots v_{2k}=1$. 
\end{proof}

We may check if the condition is satisfied by reducing $A$ to a
linearly independent set $A'$ and checking if each vector in $A\setminus A'$ is a product
of an odd number vectors in $A'$. 

The analog of Theorem~\ref{Th:small_cover} for spherical toric varieties
has a similar proof. 

\begin{theorem}\label{Th:spherical}
 Let $Y^+_\Delta\subset\bS^\Delta$ be a spherical toric variety defined by a full-dimensional
 lattice polytope $\Delta\subset\R^n$. 
\begin{itemize}
  \item[(1)]  
     $Y^+_\Delta$ is orientable if and only if there exists a basis of\/ 
     $\{\pm 1\}^{n+1}$ such that for each facet $F$ of $\Delta$ with primitive normal vector
     $b$,  the element $(-1)^{(b,b\cdot F)}$ is a product of an odd number of basis elements.
\item[(2)] 
     The components of $Y^+_\Delta$  are naturally indexed by 
   \[
      \{\pm1\}^{n+1}/\langle (-1)^{(b,b\cdot F)}\mid 
      \mbox{$b$ is a primitive normal vector to a facet $F$ of $\Delta$}\rangle\,.
   \] 
\end{itemize}
\end{theorem}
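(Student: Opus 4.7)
The plan is to imitate the proof of Theorem~\ref{Th:small_cover} essentially verbatim, with the translation dictionary: replace $\{\pm 1\}^n$ by $\{\pm 1\}^{n+1}$; replace the rank-one poset elements $\sigma$ by the facets $F$ of $\Delta$; and replace each generator $g_\sigma$ by $g_F := (-1)^{(b,b\cdot F)}$, where $b$ is the primitive inward normal to $F$. The proposition closing Section~1 already supplies the required cell-complex description, presenting $Y^+_\Delta$ as the quotient of $\Delta\times\{\pm 1\}^{n+1}$ by $(p,\xi)\sim(q,\eta)$ when $p=q$ and $\xi\,\overline{\sigma}_F^+=\eta\,\overline{\sigma}_F^+$ with $p\in F$. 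For a facet $F$ the normal cone $\sigma_F$ is a single ray through $b$, so $\overline{\sigma}_F^+$ is cyclic of order two, generated by $g_F$; thus gluing across the facet $F$ identifies $(\Delta,\xi)$ with $(\Delta,\xi g_F)$.

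With this dictionary in hand, I would first restrict to the smooth locus, taking $\Delta^\circ$ to be the union of the interior of $\Delta$ with its facets and setting $(Y^+_\Delta)^\circ := (\Delta^\circ\times\{\pm 1\}^{n+1})/{\sim}$. The connected components of $(Y^+_\Delta)^\circ$ are then the orbits of $\{\pm 1\}^{n+1}$ under the subgroup generated by $\{g_F : F\text{ a facet of }\Delta\}$, proving (2) and giving the component count $2^{n+1-k}$ where $k$ is the rank of that subgroup. For (1) I would compute the top integral homology as the kernel of the cellular boundary $\partial\colon C_n\to C_{n-1}$, where $C_n$ is free on $\{(\Delta,\xi):\xi\in\{\pm 1\}^{n+1}\}$ and $C_{n-1}$ is free on classes $[F,\xi]$ modulo $[F,\xi]\sim[F,\xi g_F]$. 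Orienting each facet so that $\partial(\Delta)=\sum_F F$, the same one-line computation as in Theorem~\ref{Th:small_cover} shows that a chain $X=\sum n_\xi(\Delta,\xi)$ is a cycle iff $n_\xi=-n_{\xi g_F}$ for every facet $F$ and every $\xi$, and nontriviality of this kernel is equivalent (by the identical argument and appeal to Lemma~\ref{L:cond}) to the existence of a basis of $\{\pm 1\}^{n+1}$ in which each $g_F$ is a product of an odd number of basis vectors.

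The only step requiring separate verification, and the one mild obstacle, is to confirm that $\overline{\sigma}_F^+$ really is the order-two group generated by $g_F$, which amounts to checking $g_F\neq 1$. This follows because $b$ is a primitive integer vector: some coordinate $b_i$ is odd, so $(-1)^{b_i}=-1$ and $g_F\in\{\pm 1\}^{n+1}$ is nontrivial. Everything else in the small-cover argument (the orientation of facets, the identification of cycles with locally constant $\pm 1$--functions on the cover, and the final linear-algebra reduction to the basis criterion) transfers without any change beyond the notational substitution above.
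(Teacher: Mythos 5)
Your proposal is correct and is essentially the paper's own argument: the paper explicitly omits this proof on the grounds that it is identical to that of Theorem~\ref{Th:small_cover} after replacing $\{\pm1\}^n$ by $\{\pm1\}^{n+1}$ and the rank-one isotropy generators $g_\sigma$ by the facet generators $(-1)^{(b,b\cdot F)}$, which is exactly the translation you carry out. Your added check that each $g_F$ is nontrivial (since $b$ is primitive) is a sensible, if minor, point of care that the paper leaves implicit.
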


\section{Examples and applications to lower bounds}

We settle questions of orientability 
left open in~\cite{SS06} and explain our motivation 
from the study of real solutions to systems of polynomials.
We begin with an example.

\subsection{Cross Polytopes}

The cross polytope is the convex hull of the basis vectors
$e_1,\dotsc,e_n$ in $\R^n$ and their negatives $-e_1,\dotsc,-e_n$.
When $n>1$ the corresponding toric variety is singular.
The rays of its normal fan have generators 
$(\pm 1,\dotsc,\pm 1)$, all with the same image in $(\Z/2\Z)^n$.
The hypotheses of Theorem~\ref{Th:small_cover} hold, and so
the corresponding real toric variety is orientable and its smooth points have $2^{n-1}$
connected components.
Figure~\ref{F:diamond} displays the cross polytope when $n=2$ and an embedding in $\R^3$ of
the corresponding real toric variety.
\begin{figure}[htb]
   \raisebox{18pt}{\includegraphics[height=100pt]{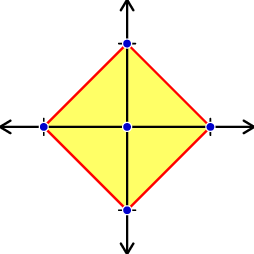}}
    \qquad\qquad
   \includegraphics[height=128pt]{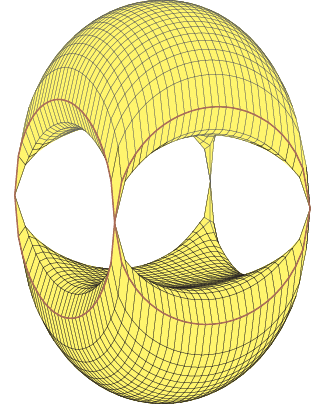}
 \caption{Two-dimensional cross polytope and double pillow.}
 \label{F:diamond}
\end{figure}
This example was treated in detail in~\cite[\S~7]{So03}.

%
\subsection{Order Polytopes}

The \demph{order polytope $O(P)$}~\cite{St86} of a finite poset $P$ is
\[
   O(P)\ :=\ \{y\in[0,1]^P\mid 
     a\leq b\mbox{ in }P\ \Rightarrow\ y_a\leq y_b\}\,.
\]
The integer points of $O(P)$ are its vertices and they correspond to the order ideals
of $P$.

\begin{theorem}
  $Y_{O(P)}$ is orientable if and only if all maximal chains of
  $P$ have odd length.  
\end{theorem}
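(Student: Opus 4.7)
My plan uses Theorem~\ref{T:orient_toric} and Lemma~\ref{L:cond} to translate orientability of $Y_{O(P)}$ into the existence of a suitable $2$-coloring of the Hasse diagram of the bounded poset $\hat{P}:=P\cup\{\hat{0},\hat{1}\}$. The primitive rays of the normal fan of $O(P)$ are $-e_a$ for each minimal $a\in P$, $e_a$ for each maximal $a\in P$, and $e_a-e_b$ for each cover relation $a\lessdot b$ of $P$; their mod-$2$ images in $(\Z/2\Z)^P$, after setting $e_{\hat{0}}=e_{\hat{1}}=0$, form the set
\[
   A\ =\ \{\,e_a+e_b\mid a\lessdot b\text{ in }\hat{P}\,\}.
\]
By Lemma~\ref{L:cond}, $Y_{O(P)}$ is orientable iff no odd-sized subset of $A$ sums to zero, and by $\Z/2\Z$-linear duality this is equivalent to the existence of a function $\phi\colon \hat{P}\to\Z/2\Z$ with $\phi(\hat{0})=\phi(\hat{1})=0$ and $\phi(a)\neq\phi(b)$ for every cover $a\lessdot b$ of $\hat{P}$; that is, to a proper $2$-coloring of the Hasse diagram of $\hat{P}$ assigning $\hat{0}$ and $\hat{1}$ the same color.

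The theorem then reduces to showing that such a $2$-coloring exists iff every maximal chain of $\hat{P}$ has an even number of edges, equivalently every maximal chain of $P$ has odd length. The forward direction is immediate, since $\phi$ alternates along every saturated chain, so a maximal chain $\hat{0}\lessdot\cdots\lessdot\hat{1}$ of $\ell$ edges forces $\phi(\hat{1})=\phi(\hat{0})+\ell\pmod{2}$, hence $\ell$ is even.

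The converse is where the work lies, since $\hat{P}$ need not be ranked and its Hasse diagram can a priori carry odd cycles. The plan is to define $\phi(x)$ as the parity of the number of edges in any saturated chain from $\hat{0}$ to $x$ in $\hat{P}$, and to prove well-definedness as follows: given two saturated chains $C_1,C_2$ from $\hat{0}$ to $x$, pick any saturated chain $D$ from $x$ to $\hat{1}$ (which exists since $\hat{P}$ is bounded); then $C_1\cup D$ and $C_2\cup D$ are both maximal chains of $\hat{P}$, both of even length by hypothesis, forcing $|C_1|\equiv|C_2|\pmod{2}$. Once $\phi$ is well-defined, $\phi(\hat{0})=0$ is trivial, $\phi(\hat{1})=0$ is the hypothesis, and $\phi(b)=\phi(a)+1$ for $a\lessdot b$ follows by appending the cover to a saturated chain from $\hat{0}$ to $a$, completing the construction.
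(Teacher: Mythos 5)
Your proof is correct, and the overall frame is the same as the paper's (reduce to Theorem~\ref{T:orient_toric} and Lemma~\ref{L:cond} applied to the mod-$2$ facet normals of $O(P)$), but you execute both directions differently, and in one direction you supply an argument the paper outsources. For the non-orientability direction the two proofs are dual to one another: the paper sums the $k{+}1$ rows of $\calA$ along a maximal chain with $k$ even to exhibit an odd relation, while you read the same relation against a putative functional $\phi$ and observe that $\phi$ must alternate along the chain; these are the same computation. For the orientability direction the paper simply cites Lemma~4.9 of \cite{SS06}, whereas you construct the required basis (equivalently, the functional equal to $1$ on all facet normals) explicitly, defining $\phi(x)$ as the parity of a saturated chain from $\hat{0}$ to $x$ in $\hat{P}=P\cup\{\hat{0},\hat{1}\}$ and proving well-definedness by completing two such chains with a common saturated chain up to $\hat{1}$ --- this makes the result self-contained, which is a genuine improvement in exposition. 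Your reformulation of the criterion as a proper $2$-coloring of the Hasse diagram of $\hat{P}$ with $\phi(\hat{0})=\phi(\hat{1})$ (via the identification of the facet normals mod $2$ with the edge vectors $e_a+e_b$, $a\lessdot b$ in $\hat{P}$, and the standard $\Z/2\Z$ duality between ``no odd subset of $A$ sums to zero'' and ``some functional is identically $1$ on $A$'') is also closer in spirit to the coloring language of Nakayama--Nishimura than the paper's matrix computation. The only steps worth spelling out in a final write-up are the duality step itself (the all-ones vector lies in the image of $\phi\mapsto(\phi(v))_{v\in A}$ if and only if every $\Z/2\Z$-relation among the elements of $A$ has even support) and the fact, used by both you and the paper, that the three listed inequality types are exactly the facets of $O(P)$.
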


\begin{proof}
 Lemma~4.9 of~\cite{SS06} (or rather its proof) implies that $Y_{O(P)}$ is orientable if
 all maximal chains of $P$ have odd length.
 We establish the converse.

The order polytope has three types of facets
\[
  \begin{array}{ll}
   y_a=0\quad &\textrm{for $a\in P$ minimal},\\
   y_b=1\quad &\textrm{for $b\in P$ maximal},\\
   y_b-y_a=0\quad &\textrm{for}\ b\  \textrm{covering}\ a\ (a\lessdot b)\ \textrm{in}\ P\,.
 \end{array}
\]
Replacing $=$ by $\geq$ gives valid inequalities 
for $O(P)$, which we write in matrix form
 \begin{equation}\label{Eq:OP}
   O(P)\ :=\ \left\{y\in \R^P \mid \calA y\geq c \right\}\,.
 \end{equation}
By Theorem~\ref{T:orient_toric}, $Y_{O(P)}$ is orientable 
if and only if there is a basis of the row space of $\calA$, reduced modulo 2, such
that each row is a sum of an odd number of basis vectors. 

Fix a maximal chain $a_1\lessdot\dots\lessdot a_k$ in $P$.
The corresponding facets of $O(P)$ are
\[
 y_{a_1}\ =\ 0\,,\quad y_{a_2}-y_{a_1}\ =\ 0\,,\quad\dots,\quad
 y_{a_k}-y_{a_{k-1}}\ =\ 0\,,
\quad y_{a_k}\ =\ 1\,,
\]
and the corresponding rows of the matrix $\calA$ (modulo 2) are 
\[
 \left[
 \begin{array}{rrrcrrrcc}
  1&0&0&\dots&0&0&0&\dots&0\\
  1&1&0&\dots&0&0&0&\dots&0\\
  0&1&1&\dots&0&0&0&\dots&0\\
  \vdots &\vdots &\vdots &\ddots&\vdots &\vdots&\vdots&\dots&\vdots\\
  0&0&0&\dots&1&0&0&\dots&0\\
  0&0&0&\dots&1&1&0&\dots&0\\
  0&0&0&\dots&0&1&0&\dots&0
 \end{array}
 \right]\ ,
\]
where the non-zero columns correespond to $a_1,\dotsc,a_k$.
This gives $k{+}1$ rows whose sum is zero modulo 2.
If $k$ is even, Lemma~\ref{L:cond} implies that $Y_{O(P)}$ is non-orientable. 
\end{proof}

A poset $P$ is \demph{ranked modulo 2} if all maximal chains in $P$ have
the same parity.

\begin{theorem}
 A spherical toric variety $Y_{O(P)}^+$ is orientable if and only if 
 $P$ is ranked modulo $2$.
\end{theorem}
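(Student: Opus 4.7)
The plan is to follow the template of the preceding proof for $Y_{O(P)}$, replacing Theorem~\ref{T:orient_toric} by its spherical analog Theorem~\ref{Th:spherical}. The facets of $O(P)$ come in the three types already listed, and a direct computation of the primitive normal vector $b$ and of the value $b\cdot F$ for each type yields the following facet elements in $\{\pm1\}^{n+1}$ (where $n=|P|$ and the extra coordinate is indexed by a new symbol $*$): the element $e_a$ for each minimal $a\in P$, the element $e_b\,e_*$ for each maximal $b\in P$, and the element $e_a\,e_b$ for each covering relation $a\lessdot b$.

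By Theorem~\ref{Th:spherical} together with Lemma~\ref{L:cond} (whose proof applies verbatim to subsets of $\{\pm1\}^{n+1}$), $Y^+_{O(P)}$ is orientable if and only if no product of an odd number of these facet elements is the identity. By the standard duality between ``odd basis-sum'' and linear functionals over $\Z/2$, this in turn is equivalent to the existence of a vector $w=(w_a)_{a\in P}\cup\{w_*\}\in(\Z/2)^{n+1}$ satisfying $\langle w,v_F\rangle=1$ for every facet $F$; spelled out, this says $w_a=1$ for every minimal $a$, that $w_b+w_*=1$ for every maximal $b$, and that $w_a+w_b=1$ for every covering relation $a\lessdot b$.

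The key observation is that, on setting $\hat P:=P\cup\{\hat 0,\hat 1\}$ and $w_{\hat 0}:=0$, $w_{\hat 1}:=w_*$, the three conditions above assert exactly that $w$ is a proper $2$-coloring of the Hasse diagram of $\hat P$ viewed as an abstract graph. Such a $w$ therefore exists if and only if this graph is bipartite. To finish I would show that bipartiteness of the Hasse diagram of $\hat P$ is equivalent to $P$ being ranked modulo $2$: if $P$ is ranked modulo $2$, define the color of $x\in\hat P$ to be the parity of the length of any saturated chain from $\hat 0$ to $x$, checking that this is well-defined by extending any two such chains to $\hat 1$ along a common saturated tail and applying the ranked-mod-$2$ hypothesis to the two resulting maximal chains of $P$; the converse is immediate, since a bipartition forces every saturated $\hat 0$-to-$\hat 1$ chain in $\hat P$, hence every maximal chain of $P$, to share a common parity.

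The main obstacle lies in the well-definedness check in the coloring construction for non-maximal $x\in P$, since the ranked-mod-$2$ hypothesis is formulated only for maximal chains of $P$; the common-tail extension trick reduces the comparison of two saturated chains to $x$ to a comparison of two maximal chains of $P$, at which point the hypothesis applies directly. Everything else is bookkeeping once the facet elements of Theorem~\ref{Th:spherical} are identified and translated into the $2$-coloring problem on~$\hat P$.
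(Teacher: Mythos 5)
Your proof is correct, but it takes a genuinely different route from the paper's. The paper handles the two directions asymmetrically: the implication ``ranked mod $2$ $\Rightarrow$ orientable'' is delegated to Lemma~4.9 of~\cite{SS06}, and for the converse it exhibits an explicit violation of Lemma~\ref{L:cond} --- the $k+1$ rows of $[\calA:c]$ attached to a maximal chain of length $k$ sum to $[0:1]$, so two maximal chains of different parities yield an odd number of rows summing to zero. You instead run a single self-contained chain of equivalences: you pass from the odd-basis-sum criterion of Theorem~\ref{Th:spherical} and Lemma~\ref{L:cond} to its dual form (existence of $w\in(\Z/2)^{n+1}$ with $\langle w,v_F\rangle=1$ for every facet vector $v_F$), and then observe that this linear system is exactly the proper $2$-coloring condition on the Hasse diagram of $\widehat P=P\cup\{\hat 0,\hat 1\}$, reducing the theorem to bipartiteness of that graph. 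Your facet elements $e_a$, $e_be_*$, $e_ae_b$ match the rows of $[\calA:c]$ mod $2$, the duality step is standard $\Z/2$ linear algebra (the system $\langle w,v\rangle=1$, $v\in A$, is solvable iff no odd subset of $A$ sums to zero, which is the content of Lemma~\ref{L:cond}), and your well-definedness argument via a common saturated tail to $\hat 1$ is sound; note also that every Hasse edge of $\widehat P$ lies on some saturated chain from $\hat 0$, so the parity coloring is proper everywhere. What your approach buys is independence from~\cite{SS06} and a transparent combinatorial picture (bipartiteness) explaining \emph{why} ``ranked mod $2$'' is the right condition; what the paper's buys is brevity, since the forward direction is already available and the backward direction needs only one explicit odd linear dependence.
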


\begin{proof}
 By Lemma~4.9 of~\cite{SS06}, $Y_{O(P)}^+$ is orientable if it is ranked modulo 2.

 Suppose that $P$ is not ranked modulo 2.
 We exhibit an odd number of rows of the augmented matrix $[\calA:c]$
 whose sum is zero modulo 2, which 
 shows that $Y_{O(P)}^+$ is not orientable,
 by Theorem~\ref{Th:spherical} and Lemma~\ref{L:cond}, as these rows have the form
 $(b,b\cdot F)$ for $b$ a primitive normal to a facet of the order polytope.

 The order polytope is defined by the facet inequalities~\eqref{Eq:OP}.
 For a maximal chain $a_1\lessdot\dots\lessdot a_k$ in $P$, the corresponding  rows of the 
 augmented matrix $[\calA:c]$ are
\[
 \left[
 \begin{array}{rrrcrrrcc|c}
  1&0&0&\dots&0&0&0&\dots&0&0\\
  1&1&0&\dots&0&0&0&\dots&0&0\\
  0&1&1&\dots&0&0&0&\dots&0&0\\
  \vdots &\vdots &\vdots &\ddots&\vdots &\vdots&\vdots&\dots&\vdots&\vdots\\
  0&0&0&\dots&1&0&0&\dots&0&0\\
  0&0&0&\dots&1&1&0&\dots&0&0\\
  0&0&0&\dots&0&1&0&\dots&0&1
 \end{array}\,\right]\ .
\]
Observe that the sum of these rows is $[0:1]$.
Each row of $[\calA:c]$ has the form $(b,b\cdot F)$ (modulo 2), where $b$ is a
primitive normal vector to a facet $F$ of $\Delta$.

Since $P$ is not ranked modulo 2, it has two maximal chains of different parities.
Summing the rows of $[\calA:c]$ which correspond to facets given by the two chains 
gives a sum of an odd number of rows of $[\calA:c]$ which is equal to zero modulo 2.
\end{proof}

\subsection{Real solutions to systems of equations}

In~\cite{SS06} we considered systems,
 \begin{equation}\label{eq:system}
   f_1(x_1,\dotsc,x_n)\ =\ f_2(x_1,\dotsc,x_n)\ =\ \dotsb\ =\ 
   f_n(x_1,\dotsc,x_n)\ =\ 0\,,
 \end{equation}
where each $f_i$ is a real polynomial whose exponent vectors lie in $\Delta\cap\Z^n$, for
a fixed lattice polytope $\Delta$, called the \demph{Newton polytope} of the system.
When the exponent vectors $\Delta\cap\Z^n$ affinely span $\Z^n$, the solutions
to~\eqref{eq:system} correspond to a linear section $L\cap Y_\Delta$ of the real
projective toric variety $Y_\Delta$ corresponding to $\Delta$. 
Here $L\subset\R\P^\Delta$ is a linear subspace of codimension $n$.
Projecting from a general codimension one linear subspace $E$ of $L$, we may realize the
solutions to~\eqref{eq:system} as the fibers of a map
\[
   \pi_E\ \colon\ Y_\Delta\ \longrightarrow\ \R\P^n\,,
\]
to real projective space.
If $n$ is odd, then $\R\P^n$ is orientable.
If $Y_\Delta$ is also orientable, then fixing orientations, 
the map $\pi_E$ has a degree whose absolute value gives a lower bound on the cardinality
of a fiber of $\pi_E$, and thus on the number of real solutions 
to~\eqref{eq:system}. 

More generally, we may lift this projection to the spherical toric varieties
 \begin{equation}\label{Eq:projection_lift}
   \pi_E^+\ \colon\ Y^+_\Delta\ \longrightarrow\ \bS^n\,.
 \end{equation}
If $Y^+_\Delta$ is orientable, we fix an orientation and the absolute value of
the degree of $\pi_E^+$ is a lower bound on the number of solutions to the
system~\eqref{eq:system}. 
Changing orientations in each component if necessary, we may assume that the degree
  is divisible by the number of components of $(Y^+_\Delta)^\circ$.

This has the following consequence for lower bounds to 
systems of polynomial equations. 

\begin{theorem}\label{Th:final}
 Suppose that we have a system of polynomials~$\eqref{eq:system}$ with Newton polytope
 $\Delta$ where $\Delta\cap\Z^n$ affinely spans $\Z^n$ 
 whose solutions are a fiber of a projection map $\pi_E^+$~$\eqref{Eq:projection_lift}$.
 If there is a basis for $\{\pm1\}^{n+1}$ such that  $(-1)^{(b,b\cdot F)}$ is a
 product of an odd number of basis elements for every primitive normal vector $b$ to a
 facet $F$ of $\Delta$, then the absolute value of the degree of the map $\pi_E^+$ is a lower
 bound for the number of real solutions to~$\eqref{eq:system}$, and this lower bound is a
 multiple of the number of components of $(Y^+_\Delta)^\circ$.

 Moreover, the map $\pi_E^+$ does not have a degree if this condition is not satisfied.
\end{theorem}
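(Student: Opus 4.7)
The proof of Theorem~\ref{Th:final} is a direct assembly of Theorem~\ref{Th:spherical} with the discussion immediately preceding the theorem, so my plan is to verify each piece slots in cleanly.

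First I would identify the basis hypothesis in Theorem~\ref{Th:final} with condition~(1) of Theorem~\ref{Th:spherical}, so it is equivalent to orientability of $(Y^+_\Delta)^\circ$. The complement $Y^+_\Delta\setminus(Y^+_\Delta)^\circ$ is a union of torus orbits corresponding to faces of $\Delta$ of codimension at least two, and therefore has real codimension at least two inside $Y^+_\Delta$. This is exactly what is needed to make the degree of $\pi_E^+$ well defined as an integer: with both $\bS^n$ and the smooth locus $(Y^+_\Delta)^\circ$ oriented, the inclusion of the smooth locus induces an isomorphism on top integral homology, and $\pi_E^+$ acts as multiplication by an integer on $H_n(-;\Z)$.

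Next I would invoke the classical fact that a proper continuous map between oriented topological $n$-manifolds without boundary has an integer degree whose absolute value is a lower bound on the cardinality of any regular fiber. Applied to $\pi_E^+$ at a regular point $y\in\bS^n$ whose fiber realizes the real solutions of~\eqref{eq:system}, this yields $|\deg\pi_E^+|\leq\#(\pi_E^+)^{-1}(y)$, which is the first conclusion. The divisibility by the number of components of $(Y^+_\Delta)^\circ$ is the content of the remark immediately preceding Theorem~\ref{Th:final}: since the overall orientation of $(Y^+_\Delta)^\circ$ can be flipped on each connected component separately, and the restriction of $\pi_E^+$ to each component contributes an integer signed count to the total degree, a suitable choice of these signs (guided by the $\{\pm1\}^{n+1}$-action on $Y^+_\Delta$, which permutes the components transitively through the quotient group of Theorem~\ref{Th:spherical}(2)) forces the total degree to be a multiple of the component count. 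The moreover statement then follows directly: if the basis condition fails, Theorem~\ref{Th:spherical}(1) makes $(Y^+_\Delta)^\circ$ non-orientable, so $H_n(Y^+_\Delta;\Z)=0$ and no integer-valued degree for $\pi_E^+$ exists to serve as a lower bound.

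The main obstacle I expect is in making the divisibility claim fully transparent. One must explain carefully why, under the $\{\pm1\}^{n+1}$-action that permutes the components of the smooth locus, each component can be assigned a local orientation so that its contribution to the signed degree count coincides with that of every other component, even though the subspace $E$ defining the projection is not itself preserved by the group action. The cleanest route I can see is an orientation-transport argument component-by-component, tracking whether each group element acts on $\bS^n$ by an orientation-preserving or reversing transformation and verifying that the outcome depends only on the image of the group element in the quotient of Theorem~\ref{Th:spherical}(2); everything else in the proof is essentially a citation.
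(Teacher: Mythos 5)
Your proposal is correct and matches the paper's route exactly: the paper offers no separate proof, deriving the theorem from Theorem~\ref{Th:spherical} together with the degree discussion immediately preceding it, which is precisely the assembly you carry out. Your elaboration of the divisibility claim (orientation transport via the $\{\pm1\}^{n+1}$-action permuting components) is in fact more careful than the paper's one-line assertion that one may "change orientations in each component if necessary."
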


\begin{remark}
 We did not need to consider the parity of $n$, for the condition of
 Theorem~\ref{T:orient_toric} implies that of Theorem~\ref{Th:spherical}.
 (A vector lies in a ray of the normal fan $\Sigma$ to $\Delta$ if and only if it is normal to
   a facet $F$ of $\Delta$.) 
\end{remark}

\subsection{Conclusions}
We characterized the orientability of $Y_\Delta$ and $Y^+_\Delta$,
  which implies that the corresponding polynomial system has lower bounds on its number of
  real solutions, expressed as 
the degree of a projection $\pi_E$ or $\pi_E^+$. 
These degrees have been computed for polynomial systems from posets~\cite{SS06} and those
from foldable triangulations~\cite{JW07,JZ12,SS06}.
Our  characterization
of orientability replaces the condition in \cite{SS06} that a variety is Cox-oriented
and therefore strengthens  the results of \cite{SS06}, particularly Theorem~3.5. 

\providecommand{\bysame}{\leavevmode\hbox to3em{\hrulefill}\thinspace}
\providecommand{\MR}{\relax\ifhmode\unskip\space\fi MR }
\providecommand{\MRhref}[2]{%
  \href{http://www.ams.org/mathscinet-getitem?mr=#1}{#2}
}
\providecommand{\href}[2]{#2}

\end{document}